\documentclass[a4paper,12pt]{amsart}
\usepackage[cm]{fullpage}
\usepackage[pagebackref]{hyperref}

\usepackage{todonotes}
\setuptodonotes{inline}

\usepackage{mymacros,oitp}

\title{Marked Artin--Schelter surfaces of del Pezzo types}
\author{Shinnosuke Okawa}
\address{
Department of Mathematics,
Graduate School of Science,
Osaka University,
Machikaneyama 1-1,
Toyonaka,
Osaka,
560-0043,
Japan.
}
\email{okawa@math.sci.osaka-u.ac.jp}

\author{Kazushi Ueda}
\address{
Graduate School of Mathematical Sciences,
The University of Tokyo,
3-8-1 Komaba,
Meguro-ku,
Tokyo,
153-8914,
Japan.}
\email{kazushi@ms.u-tokyo.ac.jp}
\date{}
\pagestyle{plain}

%
%

\begin{document}

%
%

\begin{abstract}
We introduce a class of noncommutative surfaces
called Artin--Schelter surfaces of del Pezzo types,
which contains del Pezzo surfaces as special cases.
We show that
the moduli stacks of marked Artin--Schelter surfaces of del Pezzo types
are birational to the moduli stacks of tuples
consisting of a smooth projective curve of genus 1,
two line bundles of degree 3,
and a collection of line bundles of degree 1 on the curve.
\end{abstract}

\maketitle

\section{Introduction}

A \emph{del Pezzo surface}
is a smooth projective surface $X$
whose anti-canonical divisor $-K_X$ is ample.
A \emph{marking} of $X$
is a choice of an isomorphism
from a fixed lattice $\Lambda$
to the Picard lattice $\Pic X$,
sending a particular element $\omega \in \Lambda$
to the canonical bundle.
If the base field is algebraically closed,
then
$X$
is either $\bP^1 \times \bP^1$
or $\bP^2$ blown up at
$
n = 9 - K_X^2
$ 
points in general position,
and the moduli space of marked del Pezzo surfaces of degree
$
d = K_X^2 \le 5
$
can naturally be identified
with a subspace of the configuration space
$X(3,n) \coloneqq
\left(
    (
    \bP^2
)
^{ n }
\setminus \Delta^{\mathrm{big}}
\right)
/\Aut \bP^2$
of $n$ distinct points on $\bP^2$.

A \emph{noncommutative projective plane}
is a category of the form
$
\qgr S = \gr S / \tor S
$
for an \emph{Artin--Schelter regular algebra} (\emph{AS-regular algebra} for short) $S$
of dimension 3
whose Hilbert polynomial coincides
with that of the polynomial ring
\cite{MR917738}.
While such an AS-regular algebra
is classified by a triple $(E, \sigma, L)$
consisting of a cubic curve $E$,
an automorphism $\sigma$ of $E$,
and a very ample line bundle $L$ of degree 3 on $E$
satisfying a certain generality condition,
a noncommutative projective plane is classified
by the triple
\(
\left(
E,
L_0 \! \coloneqq L,
L_1 \! \coloneqq \sigma^{ \ast } L
\right)
\)
whose isomorphism class is generically in 9-to-1 correspondence
with that of $(E, \sigma, L)$
\cite{MR1086882,MR1230966}.
Here 9
is the number of 3-torsion
translations of the curve $E$.
This 9-to-1
correspondence is a consequence
of the distinction
between AS-regular algebras and AS-regular $\bZ$-algebras:
Given an AS-regular algebra $S$,
the corresponding AS-regular $\bZ$-algebra $\Sv$
is naturally isomorphic to the full subcategory of $\qgr S$
consisting of $( S(i) )_{i \in \bZ}$,
and if the AS-regular algebra $S$ is general enough,
then there are exactly nine isomorphism classes
of AS-regular algebras
such that the corresponding AS-regular $\bZ$-algebras are isomorphic.
The isomorphism class of the
\(
   \bZ
\)-algebra
\(
   \Sv
\)
is determined by the equivalence class of the category
\(
   \qgr S
   \simeq
   \qgr \Sv
\)
\cite[Theorem~11.2.3 and Corollary~11.2.4]{MR1816070}.

The sequence $( S(i) )_{i \in \bZ}$ of objects of $\qgr S$
is an example of a \emph{very strong helix}
in the sense of \pref{df:very strong helix}.
Any del Pezzo surface $X$
admits a very strong helix
$
(E_i)_{i \in \bZ}
$
of locally free sheaves,
so that $\coh X$ is equivalent to the $\qgr$
of the corresponding AS-regular $\bZ$-algebra.
The rolled-up helix algebra
of a very strong helix on a del Pezzo surface
is 
a Calabi--Yau algebra of dimension 3
\cite{0612139}
and hence described
by a quiver $(Q,\Phi)$ with potential
\cite{MR2355031,MR3338683}.
With this in mind,
we introduce the following terminology.

\begin{definition}\label{df:main}
\begin{enumerate}
\item
A category
\(
   \cC
\)
is an \emph{Artin--Schelter surface}
if there exists an AS-regular $\bZ$-algebra $A$
of dimension 3 such that
\(
   \cC
\)
is equivalent to the category $\qgr A$.
\item
An Artin--Schelter surface
\(
    \cC
\)
is \emph{of del Pezzo type}
if there exists a quiver $Q$
which is the rolled-up helix quiver
of a very strong helix
on a del Pezzo surface and
an AS-regular $\bZ$-algebra
\(
   A
\)
of type $Q$ in the sense of \pref{df:AS-regular of type Q}
such that
\(
    \cC
\)
is equivalent to $\qgr A$.
\item
A \emph{marking} of an Artin--Schelter surface $\cC$
is a choice of a very strong helix in $D^b \cC$
such that $\qgr A$ for the associated AS-regular $\bZ$-algebra $A$
is equivalent to $\cC$.
The quiver describing the rolled-up helix algebra
is called the \emph{type} of the marking.
\item
Let $Q$ be the quiver
associated with a very strong helix
on a del Pezzo surface.
The \emph{moduli stack of marked Artin--Schelter surfaces of type $Q$}
is the open substack
of the moduli stack $\cM_Q$ of potentials
of the quiver $Q$
in the sense of \pref{df:moduli stack of potentials}
parametrizing potentials whose associated \(\bZ\)-algebras
are AS-regular.
\end{enumerate}
\end{definition}

The main result of this paper is
\pref{th:birational},
which shows that
the moduli stack $\cM_Q$
admits a natural birational map
from the moduli stack
$\cMell$
of tuples
$(E, L_0, L_1, M_1, \ldots, M_n)$
defined in \pref{df:tuple}.
\pref{th:birational}
for noncommutative projective planes
follows from the classification
of quadratic AS-regular ($\bZ$-)algebras
in \cite{MR917738,MR1086882,MR1230966}.
\cite[Theorem 4.31]{MR2836401}
gives a strengthening of
\pref{th:birational}
for noncommutative quadrics.
\cite[Theorem 1.1]{1903.06457}
gives a variant of
\pref{th:birational} for noncommutative $\bP^2$ blown-up at one point.
\pref{th:birational} is a kind of reconstruction theorem,
which is proved using mutations of exceptional collections.
It is natural to expect
that the birational inverse
$
\cM_Q \dashrightarrow \Mell
$
is given
through the moduli scheme of \emph{point representations} of
the directed subquiver
\(
\Qdir \subset Q
\)
just as in \cite[Theorem 1.1]{2404.00175}.

If the very strong helix consists of line bundles,
then the moduli space of point representations
is cut out from a toric variety
by the relations of the quiver.
A del Pezzo surface $X$ of degree $d$
has a very strong helix of line bundles
only if $d \ge 3$
\cite[Theorem 5.13]{MR2822868}.
For each $d \ge 3$,
there exists a toric weak del Pezzo surface of degree $d$,
which deforms to a del Pezzo surface of the same degree.
Any toric weak del Pezzo surface
has a very strong collection of line bundles,
which deforms to a very strong collection of line bundles
on a del Pezzo surface of the same degree
(cf., e.g., \cite{Ishii-Ueda_DMEC}).

While Artin--Schelter surfaces of del Pezzo types
appearing in this paper are noncommutative blow-ups
of noncommutative projective planes
in the sense of \cite{MR1846352},
there are other approaches to noncommutative deformations of del Pezzo surfaces,
such as \cite{MR2734346}.
While \cite{MR2734346} is based on the notion of Calabi--Yau algebras,
which is closely related to the notion of AS-regular algebras
on which this paper is based,
there are marked differences,
such as that \cite{MR2734346} deals mostly with
noncommutative deformations of
affine del Pezzo surfaces,
and that \cite{MR2734346} studies deformation problems
rather than moduli problems.
In a sense, \cite{MR2734346} is local
both in the base and the fiber of the family
on the moduli space,
whereas our approach is global
in both directions.
An expectation on the relation
between \cite{MR1846352} and \cite{MR2734346}
is mentioned in \cite[Section 1.3]{MR2734346},
and it is an interesting problem
to pursue it.

\subsection*{Notation and Conventions}
Throughout this paper
we work over an algebraically closed field $\bfk$,
and assume that all categories and functors are linear over $\bfk$.
We follow the Grothendieck's convention
for the affine space and the projective space
so that
$
\bA V \coloneqq \Spec \Sym V
$
and
$
\bP V = \Proj \Sym V
$
for a $\bfk$-vector space $V$.
The dual space of a $\bfk$-vector $V$
is denoted by $V^\dual$.
Modules are right modules
unless otherwise specified.
For a pair $(X,Y)$ of objects
in a dg category,
the dg vector space of morphisms
and its zero-th cohomology group
is denoted by
$\hom(X,Y)$
and
$\Hom(X,Y)$
respectively.
We also write
$
\Hom^i(X,Y)
\coloneqq
\Hom(X,Y[i])
$
for $i \in \bZ$.

\subsection*{Acknowledgement}
S.~O.~was partially supported by
JSPS Grants-in-Aid for Scientific Research
No.19KK0348,
20H01797,
20H01794,
21H04994,
23K25771,
25K00907,
and the National Science Foundation under Grant No.~DMS-1928930,
while his residence at the Simons Laufer Mathematical Sciences Institute in Berkeley during the Spring 2024 semester.
K.~U.~was partially supported by
JSPS Grant-in-Aid for Scientific Research
No.21K18575.

\section{AS-regular $\bZ$-algebras}\label{sc:AS-regular Z-algebras}


A \emph{$\bZ$-algebra} is a category
with $\bZ$ as the set of objects.
A $\bZ$-algebra can be regarded as a collection
consisting of vector spaces $A_{ji} = \Hom_A(i,j)$,
the identity elements $\bfe_i \in A_{ii}$,
and the composition maps
$
A_{kj} \otimes A_{ji} \to A_{ki}
$
satisfying the associativity and the unit conditions.


A $\bZ$-algebra $A$ is said to be \emph{positively graded}
if $A_{ji} = 0$ for any $j < i$.
A positively graded $\bZ$-algebra is said to be \emph{connected}
if $A_{ii} = \bfk \bfe_i$ for any $i \in \bZ$.


A $\bZ$-algebra $A$ is said to have \emph{polynomial growth}
if there exits a polynomial
\(
   p ( t )
   \in
   \bQ [ t ]
\)
such that
\(
   \dim
   A
   _{
    i j
   }
   <
   p ( i - j )
\)
for any pair
\(
   ( i, j )
   \in
   \bZ ^{ 2 }
\).


The category
$\Module A$
of presheaves over a $\bZ$-algebra $A$ is called the category of right modules over $A$. Namely, a module over a $\bZ$-algebra $A$
is a contravariant functor from $A$ to the category of vector spaces.
It can be regarded as a collection
consisting of
vector spaces $M_i$
and maps $M_j \otimes A_{ji} \to M_i$
satisfying the associativity and the unit conditions.
Likewise, a homomorphism of modules is nothing but a natural transform.
The category $\Module A$ is a Grothendieck category.
The full subcategory of $\Module A$
consisting of Noetherian objects
is denoted by $\module A$.


For a connected $\bZ$-algebra $A$
and an integer $i$,
the
\(
   i
\)-the projective module (resp.~simple module)
\(
   P _{ i }
\)
(resp.~\(
   S _{ i }
\))
is the module represented by the object
\(
   i
\)
(resp. its
\(
   1
\)-dimensional quotient); concretely, $P_i = \bfe_i A$
(resp.~$S_i = \bfe_i A \bfe_i$).
One has
$
(P_i)_j = A_{ij}
$
(resp.~$(S_i)_i = \bfk$ and $(S_i)_j = 0$
for any $j \ne i$).


For a non-negative integer $d$,
a connected $\bZ$-algebra $A$ is said to be
\emph{AS-Gorenstein}
of dimension $d$
and parameter $\ell$
if for any
$
i \in \bZ
$,
one has
\begin{align}
    \Ext^k(S_i, P_j) \cong
    \begin{cases}
    \bfk & j = i-\ell \text{ and } k = d, \\
    0 & \text{otherwise},
    \end{cases}
\end{align}
where `AS' stands for Artin--Schelter
\cite{MR917738}.
An AS-Gorenstein $\bZ$-algebra is said to be \emph{AS-regular}
if
$A$ has polynomial growth
and
$\Module A$ has finite global dimension.


A module
\(
   M \in \Module A
\)
over a
\(
   \bZ
\)-algebra $A$ is left bounded if
\(
   M _{ i }
   =
   0
\)
for
\(
   i
    \ll
    0
\),
and is said to be a \emph{torsion module} if it is a colimit of left bounded modules. We let
\(
   \Tor A
   \subseteq
   \Module A
\)
denote the full subcategory of torsion modules. The quotient of $\module A$
by the Serre subcategory
\(
   \tor A
   \coloneqq
   \Tor A
   \cap
   \module A
\)
is denoted by $\qgr A$.

\section{Exceptional collections and helices}
 \label{sc:Exceptional collection and helices}


A dg category is \emph{proper}
if
$\hom(X,Y)$ is a perfect dg vector space
(i.e., quasi-isomorphic to a bounded complex
of finite-dimensional vector spaces)
for any objects $X, Y$.


An object $E$ of a proper dg category $\scrD$ is \emph{exceptional}
if
\begin{align}
 \hom(E, E) \simeq \bfk.
\end{align}


A sequence
$
(E_1, \ldots, E_\ell)
$
of exceptional objects
is an \emph{exceptional collection} if
\begin{align}
 \hom(E_j, E_i) \simeq 0
\end{align}
for any $1 \le i < j \le \ell$.


An exceptional collection
$
(E_1, \ldots, E_\ell)
$
is \emph{strong} if
\begin{align}
 \Hom^k(E_i, E_j) = 0
\end{align}
for any $k \ne 0$
and any $i, j \in \{ 1, \ldots, \ell \}$.


An exceptional collection
$
(E_1, \ldots, E_\ell)
$
is \emph{full}
if $\scrD$ is equivalent to
the pretriangulated hull
(i.e., the smallest full subcategory
closed under shifts and cones)
of $\{ E_1, \ldots, E_\ell \}$.


If
$
(E_1, \ldots, E_\ell)
$
is a full strong exceptional collection in $\scrD$,
then one has a quasi-equivalence
\begin{align}
 \hom_{\scrD} \lb \bigoplus_{i=1}^{\ell} E_i , - \rb
 \colon
 \scrD \simto D^b \module A
\end{align}
of dg categories
between $\scrD$ and the dg category of bounded complexes
of finitely generated right modules
over the total morphism algebra
$
 A = \bigoplus_{i,j=1}^{ \ell } \Hom(E_i, E_j)
$
\cite{MR992977,MR1002456,MR1258406}.


For $1 \le i \le \ell-1$,
the \emph{left mutation}
of an exceptional collection
$
 \tau = \lb E_j \rb_{j=1}^\ell
$
at position $i$
is defined by
\begin{align}
 L_i (\tau) \coloneqq
  \lb E_1, \ldots, E_{i-1}, L_{E_i} E_{i+1}, E_i, E_{i+2}, \ldots, E_\ell \rb
\end{align}
where
\begin{align}
 L_{E_i} E_{i+1} \coloneqq \Cone \lb \hom(E_i, E_{i+1}) \otimes E_i \xto{\ev} E_{i+1} \rb.
\end{align}
It is inverse
to the \emph{right mutation}
defined by
\begin{align}
 R_i (\tau) \coloneqq
  ( E_1, \ldots, E_{i-1}, E_{i+1}, R_{E_{i+1}} E_i, E_{i+2}, \ldots, E_\ell )
\end{align}
where
\begin{align}
  R_{E_{i+1}} E_i \coloneqq \Cone \lb E_i \xto{\coev} \hom(E_i, E_j)^\dual \otimes E_j \rb.
\end{align}


The \emph{helix}
generated by an exceptional collection $\lb E_i \rb_{i=1}^\ell$
is the sequence $\lb E_i \rb_{i \in \bZ}$
of exceptional objects
satisfying
\begin{align} \label{eq:helix}
 E_{i-\ell} = L_{E_{i-\ell+1}} \circ L_{E_{i-\ell+2}}
  \circ \cdots \circ L_{E_{i-1}}(E_i)[ - 2 ]
\end{align}
for any $i \in \bZ$.
The shift
\(
   [ - 2 ]
\)
in~\eqref{eq:helix} is chosen in such a way that
$E_{i-\ell} \cong \bS(E_i)[-2]$,
so that if
$\scrD \simeq D^b \coh X$
for a smooth projective surface $X$ and the collection is full, then one has
$E_{i-\ell} \cong E_i \otimes \omega_X$.
%
%
The length $\ell$ of the exceptional collection
generating the helix is called the \emph{period} of the helix.
%
%
For any $i \in \bZ$,
the exceptional collection $\lb E_j \rb_{j=i}^{i+\ell-1}$
consisting of $\ell$ consecutive objects in a helix
is called a \emph{foundation}
of the helix.
%
%
In this paper,
we will always assume
that a foundation of a helix is full.
%
%
A helix
$
\lb E_i \rb_{i \in \bZ}
$
is said to be \emph{strong}
if every foundation is strong.

\begin{definition} \label{df:very strong helix}
A helix 
$
\lb E_i \rb_{i \in \bZ}
$
is said to be \emph{very strong}
if
$
 \Hom^{k}(E_i, E_j) = 0
$
for any $-\infty < i < j < \infty$
and any $k \ne 0$.
\end{definition}
Very strongness is closely related to,
but different from,
the \emph{geometricity}
in \cite{MR1230966}.
%
%
As explained in \cite{2007.07620}, a very strong helix
$(E_i)_{i \in \bZ}$
of period $\ell$
gives the $\bZ$-algebra $A$
defined by
$
A_{ji} = \Hom(E_i, E_j),
$
which is AS-regular
with parameter $\ell$.
%
%
It follows from 
(the proof of) \cite[Theorem 16.(i)]{MR2641200}
that
if $A$ is an AS-regular $\bZ$-algebra
with parameter $\ell$,
then the sequence 
$(P_i)_{i \in \bZ}$
of projective modules
regarded as objects of $\qgr A$
gives a very strong helix
of period $\ell$,
and one has
\begin{align}
\Hom_{\qgr A}(P_i, P_j)
\cong
\Hom_{\module A}(P_i, P_j)
\end{align}
for any $i \le j$.

\section{Quivers and potentials}
\label{sc:quiver}


A \emph{quiver}
$
Q = (Q_0, Q_1, s, t)
$
consists of
\begin{itemize}
 \item
a set $Q_0$ of \emph{vertices},
 \item
a set $Q_1$ of \emph{arrows}, and
 \item
two maps
$s$ and $t$
from $Q_1$ to $Q_0$.
\end{itemize}
For an arrow $a \in Q_1$,
the vertices $s(a)$ and $t(a)$
are
called the \emph{source}
and the \emph{target}
of the arrow $a$
respectively.


A \emph{path}
is either
\begin{itemize}
\item
a path of length $k$
for some positive integer $k$,
which is a sequence
$p = (a_k, a_{k-1}, \dots, a_{1}) \in (Q_1)^k$
of arrows
such that $s(a_{i+1}) = t(a_i)$
for all $i \in \{ 1, \dots, k-1 \}$,
in which case we define the \emph{source} and the \emph{target} of $p$
by $s(p) \coloneqq s(a_1)$
and $t(p) \coloneqq t(a_k)$,
or 
\item
a path $\bfe_v$ of length zero
for some $v \in Q_0$,
in which case we set
$
s(\bfe_v)
= t(\bfe_v)
= v
$.
\end{itemize}


The \emph{concatenation} of paths
is defined by
\begin{align}
 (b_m, \dots, b_1) \cdot (a_n, \dots, a_1)
  = \begin{cases}
     (b_m, \dots, b_1, a_n, \dots, a_1) & s(b_1) = t(a_n), \\
      0 & \text{otherwise}
    \end{cases}
\end{align}
for paths of positive length,
and
\begin{align}
p \cdot \bfe_v
&=
\begin{cases}
p & s(p) = v, \\
0 & \text{otherwise},
\end{cases}
\\
\bfe_v \cdot p
&=
\begin{cases}
p & t(p) = v, \\
0 & \text{otherwise}
\end{cases}
\end{align}
if the length of at least one of the paths is zero.


For a non-negative integer $k$,
let
$\scrP_k(Q)$
be the set of paths of length $k$.
The \emph{path algebra} $\bfk Q$
is the algebra
freely generated
by the set
$
\scrP(Q)
\coloneqq
\coprod_{k=0}^\infty \scrP_k(Q)
$
of paths
as a vector space,
and the multiplication is defined
by the concatenation of paths.
It is graded by the length of the path as
\begin{align}
\bfk Q = \bigoplus_{k=0}^\infty \lb \bfk Q \rb_k,
\end{align}
where
$
\lb \bfk Q \rb_k
\coloneqq \bfk \scrP_k(Q)
$
is the vector space
freely generated by $\scrP_k(Q)$.


A \emph{quiver with relations} is a pair
$
\Gamma = (Q, I)
$
consisting of a quiver $Q$
and a two-sided ideal $I$
of the path algebra $\bfk Q$.
The \emph{path algebra} of $\Gamma$ is defined as
\begin{align}
\bfk \Gamma \coloneqq \bfk Q / I.
\end{align}


A \emph{cycle} or \emph{cyclic path} is a path
\(
   p
\)
satisfying the extra condition
\(
    s(p) = t(p)
\).
For a non-negative integer $k$,
let $\scrC_k(Q) \subset \scrP _{ k } ( Q )$ be the set of cyclic paths of length $k$,
and $\bfk \scrC_k(Q)$ the vector space
freely generated by $\scrC_k(Q)$.
The cyclic group of order $k$ acts on
\(
   \scrC_k(Q)
\)
as
$
(a_k,a_{k-1},\ldots,a_1)
\mapsto
(a_1,a_k,\ldots,a_2)
$.

For each $a \in Q_1$,
we define a map
\begin{align}
\frac{\partial}{\partial a}
\colon
\bfk \scrC_k(Q) /
\left(
    \bZ / k \bZ
\right)
\to
\lb \bfk Q \rb_{k-1}
\end{align}
by sending the orbit of the cyclic path
\(
   p
   =
    (a_k,\ldots,a_1)
\)
to
\begin{align}
  \sum_{i=1}^k \delta_{a_i,a} a_{i-1} \cdots a_1 a_k \cdots a_{i+1}
\end{align}
and extending linearly,
where
\begin{align}
  \delta_{a,a'} =
\begin{cases}
  1 & a = a', \\
  0 & \text{otherwise}
\end{cases}
\end{align}
is the Kronecker delta.
A \emph{quiver with potential}
is a pair $(Q, \Phi)$
consisting of a quiver $Q$
and an element
$
\Phi \in
\bigoplus_{k=0}^\infty
\bfk \scrC_k(Q) /
\left(
    \bZ / k \bZ
\right)
$
called a \emph{potential}.
A quiver with potential $(Q, \Phi)$
gives rise to a quiver with relations $\Gamma = (Q, \partial \Phi)$
where
\begin{align}
    \partial \Phi \coloneqq \lb \frac{\partial}{\partial a} \Phi \rb_{a \in Q_1}.
\end{align}

A cyclic path is said to be \emph{primitive}
if it is not a concatenation of two or more cyclic paths.
The set of primitive cyclic paths is denoted by $\scrC^{\mathrm{prim}}(Q)$.

\begin{definition} \label{df:moduli stack of potentials}
The \emph{moduli stack of potentials} of a quiver $Q$
is the quotient stack
\begin{align}
\cM_Q
\coloneqq
\ld
\bA^{\scrC^{\mathrm{prim}}(Q)}
\middle/
\Aut \lb \bfk Q \middle/ \bfk^{Q_0} \rb
\rd
\end{align}
of the affine space
generated by the set of primitive cyclic paths
by
the natural action
of the group
$
\Aut \lb \bfk Q \middle/ \bfk^{Q_0} \rb
$
of automorphisms of $\bfk Q$
as an algebra over the semisimple algebra $\bfk^{Q_0}$.
\end{definition}

\section{Rolled-up helix quiver}

Let $(E_i)_{i \in \bZ}$ be a very strong helix
on a del Pezzo surface $X$
of degree $d$,
so that the associated $\bZ$-algebra $A$ is
AS-regular of dimension 3 with parameter $\ell = 12-d$.
The \emph{rolled-up helix algebra}
is
defined as
\begin{align}\label{eq:rolled-up helix algebra}
\End \lb \bigoplus_{i=1}^{\ell} \pi^* E_i \rb
\cong
\bigoplus_{i=1}^\ell \bigoplus_{k=0}^\infty
\Hom \lb E_i, E_{i+k} \rb
\end{align}
where
$
\pi \colon \bA \lb \omega_X^\dual \rb
\to X
$
is the total space of the canonical bundle of \( X \).
It is a Calabi--Yau algebra of dimension 3
in the sense of
\cite[Definition~3.2.3]{0612139},
so that it comes from a quiver $(Q,\Phi)$ with potential.
The quiver
\(
   Q
\),
which depends only on the deformation class of the del Pezzo surface $X$,
will be called the \emph{rolled-up quiver}.
The set of vertices of $Q$ is given by
\begin{align}
  Q_0 = \{ 1, \ldots, \ell \},
\end{align}
which is identified with the foundation
$
(E_i)_{i=1}^\ell
$
of the helix.
For $1 \le i, j \le \ell$,
the set of arrows from $i$ to $j$
is identified with a basis
$
\{ x_{jik} \}_{k=1}^{m_{ji}}
$
of the space of \emph{primitive morphisms}
\begin{align}\label{eq:primitive morphisms}
\Hom^{\mathrm{prim}}(E_i,E_j)
\coloneqq
\left.
\Hom(E_i, E_j)
\middle/
\Image
\left(
    \bigoplus_{i < k < j} \Hom(E_k,E_j) \otimes \Hom(E_i,E_k)
    \to
    \Hom
    \left(
        E _{ i },
        E _{ j }
    \right)
\right)
\right.
\end{align}
if $i \le j$
(resp.,
$
\Hom^{\mathrm{prim}}(E_i,E_{j+\ell})
$
if
$i > j$).

\begin{proposition}
A resolution of the simple $A$-module $S_i$ is given by
\begin{align} \label{eq:resolution}
0
\to
P_{i-\ell}
\to
\bigoplus_{i < j \le \ell} P_{j-\ell}^{\oplus m_{ji}}
\to
\bigoplus_{1 \le j < i} P_j^{\oplus m_{ij}}
\to
P_{i}
\to
S_{i}
\to
0.
\end{align}
\end{proposition}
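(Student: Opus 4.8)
The plan is to construct the resolution \eqref{eq:resolution} geometrically, working in the derived category of the del Pezzo surface $X$ (equivalently, in $\qgr A \simeq D^b \coh X$), and then transport it back to $\module A$ using the identification $\Hom_{\qgr A}(P_i, P_j) \cong \Hom_{\module A}(P_i, P_j)$ for $i \le j$ recalled at the end of \pref{sc:Exceptional collection and helices}. Under the equivalence, $P_j$ corresponds to the helix object $E_j$, and $P_{j-\ell}$ corresponds to $E_{j-\ell} \cong E_j \otimes \omega_X$. The simple module $S_i$ should correspond, up to shift, to an object supported at the "$i$-th vertex"; concretely, since $(E_j)_{j=i}^{i+\ell-1}$ is a full strong exceptional collection, the simple $S_i$ is the $i$-th object of the dual exceptional collection, i.e.\ (a shift of) the right dual $E_i^{\vee}$ with respect to this foundation. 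So the first step is to identify $S_i$ precisely with this dual object and keep track of the cohomological shift.

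Next I would build the Koszul-type complex expressing $E_i^{\vee}$ in terms of the $E_j$'s. For a full strong exceptional collection on a surface, the dual object admits a canonical resolution whose terms are governed by the spaces of morphisms $\Hom(E_j, E_k)$, and after quotienting by composites one gets exactly the \emph{primitive} morphism spaces \eqref{eq:primitive morphisms} as the "off-diagonal" data; the multiplicities $m_{ji}$ and $m_{ij}$ are by definition $\dim \Hom^{\mathrm{prim}}(E_i, E_j)$ and $\dim \Hom^{\mathrm{prim}}(E_i, E_{j+\ell})$. Since $X$ is a surface, this resolution has length $3$ (four nonzero terms), matching the shape of \eqref{eq:resolution}: the rightmost term is $P_i \leftrightarrow E_i$ itself; the two middle terms collect the $E_j$ with $j<i$ (contributing $P_j^{\oplus m_{ij}}$) and the $E_{j-\ell}$ with $j>i$ in the foundation (contributing $P_{j-\ell}^{\oplus m_{ji}}$, using the helix relation to wrap around); and the leftmost term is $P_{i-\ell} \leftrightarrow E_i \otimes \omega_X$, which is forced by Serre duality since $\Hom^2(E_i, E_i) \cong \Hom(E_i, E_i \otimes \omega_X)^{\vee} \cong \bfk$. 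The cleanest way to see the precise terms is to dualize the bar-type resolution coming from the rolled-up helix algebra / the Calabi--Yau-3 structure: the Calabi--Yau property gives a self-dual projective resolution of each simple of the form $P_{i-\ell} \to (\text{arrows out, wrapped}) \to (\text{arrows in}) \to P_i$, and \eqref{eq:resolution} is literally the minimal projective resolution of $S_i$ over a quiver with potential whose quiver is the rolled-up helix quiver.

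Alternatively — and this is probably the slickest route — one can argue entirely inside $\module A$: $A$ is a connected graded $\bZ$-algebra of global dimension $3$, so $S_i$ has a minimal projective resolution of length $\le 3$, and its terms are determined by $\Ext^{\bullet}(S_i, S_j)$. The $\Ext^1$'s are spanned by the arrows, giving the term $\bigoplus_{1 \le j < i} P_j^{\oplus m_{ij}}$; the $\Ext^2$'s are spanned by the relations $\partial \Phi$, which by the structure of the potential correspond to the $E_{j-\ell}$ with $i<j\le\ell$, giving $\bigoplus_{i<j\le\ell} P_{j-\ell}^{\oplus m_{ji}}$; and the AS-regularity condition $\Ext^k(S_i, P_j) \cong \bfk$ for $j = i-\ell$, $k=3$ and $0$ otherwise pins down the last term as $P_{i-\ell}$ with multiplicity one. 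Exactness at the ends is then automatic, and exactness in the middle is the statement that $\partial\Phi$ generates all relations, which is part of the hypothesis that $A$ is the path algebra of the quiver with potential. The main obstacle I anticipate is bookkeeping: getting the index ranges and the $\ell$-shift in the two middle terms exactly right (i.e.\ checking that "arrows into $i$" land in positions $1,\dots,i-1$ while "arrows out of $i$", after applying the helix wrap-around $E_j \rightsquigarrow E_{j-\ell}$, land in positions $i+1-\ell,\dots,0$, which is why $P_{j-\ell}$ appears), and verifying that no primitive morphism is double-counted. Once the combinatorics of the rolled-up helix quiver and its potential are set up carefully, the exactness is formal from global dimension $3$ plus the AS-Gorenstein condition.
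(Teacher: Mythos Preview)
Your first approach contains a genuine gap. The simple module $S_i$ is a torsion $A$-module (it is one-dimensional, hence left bounded), so its image in $\qgr A$ is \emph{zero}. It therefore cannot be identified with the $i$-th dual exceptional object in $D^b\coh X$, which is a nonzero object. You are conflating the simple module of the infinite $\bZ$-algebra $A$ with the simple module of the finite-dimensional endomorphism algebra of a single foundation; these live in different module categories, and only the latter corresponds to a dual exceptional object. What the geometric picture actually gives you is that the complex $P_{i-\ell}\to\cdots\to P_i$ becomes acyclic after passing to $\qgr A$; but that alone does not tell you that its cohomology in $\module A$ is precisely $S_i$ rather than some larger torsion module.

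Your second approach is closer but circular as written: you invoke that ``$\partial\Phi$ generates all relations, which is part of the hypothesis that $A$ is the path algebra of the quiver with potential.'' That is not a hypothesis here. The algebra $A$ is defined from the helix by $A_{ji}=\Hom(E_i,E_j)$, and the content of the proposition is precisely what allows one to identify $A$ with (the $\bZ$-algebra associated to) a Jacobi algebra. So you would need an independent computation of $\Ext^2$ and $\Ext^3$, which you have not supplied.

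The paper's proof is exactly the remark you make in passing as ``the cleanest way'': the rolled-up helix algebra \eqref{eq:rolled-up helix algebra} is graded Calabi--Yau of dimension $3$, so by the cited result of Bocklandt its simple modules have the standard four-term projective resolution whose terms are read off from the quiver $Q$. One then identifies graded right modules over \eqref{eq:rolled-up helix algebra} with right $A$-modules (via the associated $\bZ$-algebra), and the resolution transports verbatim to \eqref{eq:resolution}. No geometric identification of $S_i$ inside $\qgr A$, and no separate $\Ext$ bookkeeping, is needed. You had the right idea but buried it between two approaches that do not close.
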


\begin{proof}
Since the rolled-up helix algebra~\eqref{eq:rolled-up helix algebra} is graded Calabi-Yau of dimension 3, where the grading comes from the index
\(
   k
\)
in~\eqref{eq:rolled-up helix algebra}, the diagonal bimodule and hence the simple modules have a particular type of resolutions whose shape depends only on the quiver~\( Q \) (\cite[Theorem~4.5]{MR2355031}). On the other hand, the
\(
   \bZ
\)-algebra associated to~\eqref{eq:rolled-up helix algebra} is naturally isomorphic to
\(
   A
\), through which we identify graded right modules over~\eqref{eq:rolled-up helix algebra} with right modules over the $\bZ$-algebra \( A \) (cf.,~\cite[equation~(20)]{2007.07620}). Thus we obtain the resolutions~\eqref{eq:resolution}.
\end{proof}

Let us fix a quiver
\(
   Q
\)
as above.

\begin{definition} \label{df:AS-regular of type Q}
An AS-regular $\bZ$-algebra
is said to be \emph{of type $Q$}
if a resolution of the simple module $S_i$
has the form \eqref{eq:resolution}
for any $i \in \bZ$.
\end{definition}

\begin{example}
    Let
    \(
       Q
    \)
    be the rolled-up quiver obtained from the helix
    \(
    \left(
        \cO
       _{
        \bP ^{ 2 }
       }
       ( i )
    \right)
    _{
        i \in \bZ
    }
    \)
    on
    \(
         \bP ^{ 2 }
    \). Then AS-regular 
    \(
       \bZ
    \)-algebras of type
    \(
       Q
    \)
    are precisely the 3-dimensional AS-regular quadratic
    \(
       \bZ
    \)-algebras. Similarly, from the helix
    \(
       \left(
        \dots,
        \cO ( - 1, 0 ),
        \cO ( 0, 0 ),
        \cO ( 0, 1 ),
        \cO ( 1, 1 ),
        \dots
       \right)
    \)
    on
    \(
       \bP ^{ 1 } \times \bP ^{ 1 }
    \)
    we obtain the definition of 3-dimensional AS-regular cubic
    \(
       \bZ
    \)-algebras (see~\cite{MR2836401}). A class of AS-regular
    \(
       \bZ
    \)-algebras obtained from a 3-block helix on cubic surfaces is discussed in~\cite[Section~4]{2404.00175}.
\end{example}

\begin{remark}
While the set $\bZ$
in \pref{df:AS-regular of type Q}
comes from the indexing set of the helix
$\lb E_i \rb_{i \in \bZ}$,
the use of $\bZ$
as an indexing set of a helix is
often
not canonical,
and
it is more natural to talk
about AS-regular $\Qv$-algebras
than AS-regular $\bZ$-algebras of type $Q$,
where
\(
   \Qv
   =
   \left(
    \Qv_0, \Qv_1, \sv, \tv
   \right)
\)
is the quiver
defined by
\begin{align}
\Qv_0
&= Q_0 \times \bZ, \\
\Qv_1
&= Q_1 \times \bZ, \\
\sv(a,j)
&= (s(a),j), \\
\tv(a,j)
&=
\begin{cases}
(t(a),j) & s(a) < t(a), \\
(t(a),j+1) & \text{otherwise}.
\end{cases}
\end{align}
The set $\Qv_0$ has an order $\prec$
which is initial among those satisfying
the condition
that $\sv(a) \prec \tv(a)$ for any $a \in \Qv_1$,
and
an identification of $\Qv_0$
with $\bZ$
is given by
a choice of an order-preserving bijection
$
(\Qv_0, \prec) \to (\bZ, <)
$.
In terms of $\Qv$,
the resolution
\pref{eq:resolution}
can be written as
\begin{align}
    0
    \to
    P
    _{
    \left(
        i,
        d - 1        
    \right)            
    }
    \to
    \bigoplus
    _{
        \substack{
            \bv
            \in
            \Qv _{ 1 }
            \\
            \sv ( \bv )
            =
            \left(
                i,
                d - 1        
            \right)
        }
    }
    P
    _{
        \tv ( \bv )
    }
    \to
    \bigoplus
    _{
        \substack{
            \av
            \in
            \Qv _{ 1 }
            \\
            \tv ( \av )
            =
            \left(
                i,
                d        
            \right)
        }
    }
    P
    _{
        \sv ( \av )
    }
    \to
    P
    _{
    \left(
        i,
        d        
    \right)
    }
    \to
    S        
    _{
        \left(
            i,
            d        
        \right)
    }
    \to
    0.
\end{align}
\end{remark}

\section{Marked Artin--Schelter surfaces and geometric data}\label{sc:Marked Artin--Schelter surfaces and geometric data}

One of the most significant results
concerning 3-dimensional AS-regular quadratic or cubic $\bZ$-algebras
is the bijective correspondence
between such algebras and certain geometric data
consisting of a genus one curve with additional structure
\cite{MR1086882,MR1230966,MR2836401}.
In this section,
we partially generalize this correspondence
to AS-regular \(\bZ\)-algebras
associated with an arbitrary del Pezzo surface,
formulating it as a birational map
between the moduli stacks.

\begin{definition}\label{df:tuple}
    The stack $\cMell$ is the moduli stack of tuples
    \(
       \left(
        E,
        L_0,
        L_1,
        M _{ 1 },
        \ldots,
        M _{ n }
       \right)
    \)
    consisting of
    \begin{itemize}
        \item 
        a smooth projective curve
         \(
           E
        \)
        of genus 1,
        \item
         two line bundles
        \(
           L_0
        \)
        and
        \(
           L_1
        \)
        on
        \(
           E
        \)
        of degree
        \(
           3
        \),
         and
        \item
         line bundles
        \(
           M_1, \ldots, M_n
        \)
        on
        \(
           E
        \)
        of degree
        \(
           1
        \).
    \end{itemize}
    An isomorphism from
    \(
       (E, L_0, L_1, M_1, \ldots, M_n)
    \)
    to
    \(
       (E', L_0', L_1', M_1', \ldots, M_n')
    \)
    is a tuple
    \(
       ( f, \varphi_0, \varphi_1, \psi _{ 1 }, \dots, \psi _{ n } )
    \)
    consisting of
    \begin{itemize}
        \item
         an isomorphism
        \(
           f \colon E \to E'
        \)
        of curves
         and
        \item
         isomorphisms
        \(
           \varphi _{ j }
           \colon
            L _{ j }
            \simto
            f ^{ \ast } L _{ j }'
        \)
         and
        \(
           \psi _{ k }
           \colon
            M _{ k }
            \simto
            f ^{ \ast } M _{ k }'
        \)
        of line bundles for
        \(
           j = 0, 1
        \)
        and
        \(
           k = 1, \ldots, n
        \).
    \end{itemize}
\end{definition}

Fix a very strong helix
on a del Pezzo surface
and an element
\(
\sigma
\)
of the braid group
\(
\Br_{n+3}
\)
on
\(
   n + 3
\)
strands such that the sequence of mutations given by
\(
   \sigma
\)
turns the Beilinson--Orlov collection
\(
(
\cO,
\cO ( 1 ),
\cO ( 2 ),
\cO _{ E _{ 1 } },
\dots,
\cO _{ E _{ n } }
)
\)
into the foundation of the very strong helix,
whose existence is guaranteed by~\cite[Theorem~7.7]{MR1286839}.
The corresponding rolled-up quiver is denoted by $Q$.

Given a tuple
\(
    \left(
    E,
    L_0,
    L_1,
    M _{ 1 },
    \ldots,
    M _{ n }
    \right)
\)
as in~\pref{df:tuple},
let
\(
\left(
\cS _{ i }
\right)_{i=1}^{n+3}
\)
be the spherical collection
obtained from
\(
   \left(
    \cO _{ E },
    L _{ 0 },
    L _{ 0 } \otimes L _{ 1 },
    M _{ 1 },
    \dots,
    M _{ n }
   \right)
\)
by the sequence of spherical twists
specified by the element
\(
\sigma \in \Br_{n+3}
\).

\begin{lemma}\label{lm:very strong}
If the tuple
\(
    \left(
    E,
    L_0,
    L_1,
    M _{ 1 },
    \ldots,
    M _{ n }
    \right)
\)
is sufficiently general,
then the spherical helix
generated by
\(
\left(
\cS_i
\right)_{i=1}^{n+3}
\)
is very strong.
\end{lemma}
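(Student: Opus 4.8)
The plan is to reduce very-strongness of the spherical helix to a statement that can be checked on an explicit toric degeneration, and then argue that the locus where it holds is open and nonempty, hence generic.

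First I would recall that, by the construction in \cite[Theorem~7.7]{MR1286839}, the sequence of spherical twists specified by $\sigma \in \Br_{n+3}$ is the ``same'' sequence of mutations that carries the Beilinson--Orlov exceptional collection on a del Pezzo surface $X$ into the foundation $(E_i)_{i=1}^\ell$ of the chosen very strong helix. Over the genus-one curve $E$, the objects $\cO_E, L_0, L_0\otimes L_1, M_1,\ldots,M_n$ are spherical in the corresponding Calabi--Yau-type triangulated category (the canonical bundle being trivial, $\Ext^0$ and $\Ext^1$ are Serre-dual and both one-dimensional for each, with no higher $\Ext$'s), so spherical twists are defined and the formalism of \cite{MR1286839} applies. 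The key point is that $\Hom^\bullet$ between members of a mutated/twisted collection is computed by the same ``combinatorial recipe'' (a chain of cones/triangles governed by $\sigma$) in both the del Pezzo and the genus-one settings; the difference is only in the input $\Hom$-spaces between the starting objects. Hence I would phrase very-strongness as the vanishing of $\Hom^k(\cS_i,\cS_j)$ for $k\ne 0$ and all $i<j$ in the helix, and observe this is an open condition on the moduli stack $\cMell$ (the relevant $\Ext$-groups are upper-semicontinuous in the tuple, and vanishing of all of them is open; only finitely many need be checked because shifting by the period $\ell$ tensors by $\omega_X\cong\cO$, so the helix is ``periodic up to the degree-zero structure'').

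Next, to prove the open locus is nonempty I would exhibit one tuple that works. The natural candidate comes from a toric weak del Pezzo surface $X_0$ of the same degree $d$: as recalled in the introduction (cf.\ \cite{Ishii-Ueda_DMEC}, \cite[Theorem 5.13]{MR2822868}), $X_0$ carries a very strong collection of line bundles, which deforms to a very strong collection of line bundles on a del Pezzo surface. Restricting the Beilinson--Orlov-type collection to a smooth anticanonical genus-one curve $E_0\subset X_0$ (the anticanonical system of a weak del Pezzo surface of degree $\ge 1$ contains a smooth member, or one passes to a small deformation), the data $(\cO_{E_0},L_0,L_0\otimes L_1,M_1,\ldots,M_n)$ is read off from the restricted collection, and the spherical helix it generates is exactly the restriction of the very strong helix $(\pi^*E_i)$ to $E_0$ up to the relevant cones. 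Since restriction to $E$ from the surface is exact on the relevant $\Hom$-complexes in the range that matters (by a standard argument using the ideal sheaf sequence $0\to\cO_X(-E)\to\cO_X\to\cO_E\to0$ and vanishing of $\Hom^{>0}$ between the members of the very strong collection on $X$, together with $\cO_X(-E)=\omega_X$), very-strongness on the surface forces very-strongness of the spherical helix on $E_0$. This produces the desired point, and openness then gives the lemma.

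The main obstacle, I expect, is making the comparison in the previous paragraph precise: one must show that the chain of spherical twists on $D^b(\coh E)$ really does compute the restrictions of the mutated objects on $X$, i.e.\ that ``mutation commutes with restriction to the anticanonical curve'' in the needed range. This is where the Calabi--Yau structure of $E$ is essential (so that spherical twists, not mere left mutations, are the correct operation), and where one has to control the possible jump of $\Hom$-dimensions upon restriction — i.e.\ rule out extra $\Ext^1$'s appearing on $E_0$ that were not visible on $X$. I would handle this by a dimension count: the Euler characteristics $\chi(\cS_i,\cS_j)$ are determined combinatorially by $\sigma$ and match those predicted by the AS-regular $\bZ$-algebra of type $Q$ (the resolution~\eqref{eq:resolution}), so it suffices to bound the $\Ext$'s from one side; upper-semicontinuity then pins them down on a nonempty open set. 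A secondary technical point is checking that each $\cS_i$ is genuinely spherical (not just the starting objects), which follows inductively since spherical twists preserve sphericalness, and that the ``helix'' relation analogous to~\eqref{eq:helix} holds with the Serre functor of $D^b(\coh E)$ being $[1]$, making the period-$\ell$ shift act trivially as required.
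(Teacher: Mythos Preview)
Your overall strategy---upper semicontinuity plus exhibiting one tuple for which the helix is very strong, obtained by restriction from a surface---is exactly the paper's. Two points of divergence are worth noting.

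First, your finiteness argument is incorrect. You write that ``shifting by the period $\ell$ tensors by $\omega_X \cong \cO$'' and later that the Serre functor of $D^b(\coh E)$ being $[1]$ makes ``the period-$\ell$ shift act trivially.'' Neither is right: on a del Pezzo surface $\omega_X$ is anti-ample, and the spherical helix on $E$ is the restriction of the exceptional helix on $X$, so $\cS_{i-\ell}\cong\cS_i\otimes(\omega_X|_E)$ with $\deg(\omega_X|_E)=-d<0$; there is no periodicity. The paper's reason that only finitely many vanishings need checking is different: spherical objects on a genus-one curve are, up to shift, semistable bundles or skyscraper sheaves, so $\Hom^\bullet(\cS_i,\cS_j)$ lives in two adjacent degrees and Riemann--Roch controls it; since tensoring by $\omega_X|_E$ strictly lowers slope, once $j-i$ is large the slope gap forces $\Hom^1(\cS_i,\cS_j)=\Hom^0(\cS_j,\cS_i)^\dual=0$ automatically.

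Second, your choice of witness is more complicated than necessary. The paper simply takes $L_0\simeq L_1$: then $E$ sits as an anticanonical curve in the honest del Pezzo surface $\Bl_{p_1,\ldots,p_n}\bP H^0(L_0)$ (with $M_k\simeq\cO_E(p_k)$), and the spherical helix is literally the restriction of the fixed very strong helix on that surface, so very-strongness is immediate. There is no need to pass to a toric weak del Pezzo or to worry about whether the very strong collection there is compatible with the chosen $\sigma$. Your ideal-sheaf-sequence argument for why restriction preserves very-strongness is correct and is implicit in the paper's one-line ``automatically very strong.''
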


\begin{proof}
Let
\(
   E
   \hookrightarrow
   \Bl _{ p _{ 1 }, \ldots, p _{ n } }
   \bP
   H ^{ 0 }
   \left(
   L _{ 0 }
   \right)
\)
be the embedding of the curve $E$
to the blow-up of
\(
   \bP H ^{ 0 } \left(
   L _{ 0 }
   \right)
   \simeq
   \bP ^{ 2 }
\)
at the points
\(
   p _{ 1 }, \ldots, p _{ n }
\)
such that
\(
   M _{ k }
   \simeq
   \cO _{ E } ( p _{ k } )
\)
for
\(
   k = 1, \ldots, n
\),
which lifts
\(
   E
   \hookrightarrow
   \bP H ^{ 0 } \left(
   L _{ 0 }
   \right)
\).
If
\(
   L _{ 0 } \simeq L _{ 1 }
\), then the spherical helix on
\(
   E
\)
is obtained by restricting the very strong helix on the surface,
in which case it is automatically very strong.

If
\(
   L _{ 0 }
   \not\simeq
   L _{ 1 }
\),
then
there are only finitely many cohomology vanishings to be checked for a spherical collection on
\(
   E
\)
to be very strong. This follows from the Riemann--Roch and that spherical objects on a smooth projective curve of genus 1 are either semistable vector bundles or skyscraper sheaves up to shift (see, e.g.,~\cite{MR2264663}). Deforming from the case where
\(
   L _{ 0 }
   \simeq
   L _{ 1 }
\),
where the spherical collection is very strong as we confirmed in the previous paragraph,
we obtain the conclusion
by the upper semi-continuity of the dimensions of cohomology groups.
\end{proof}

\begin{definition}
Let
\begin{align}\label{eq:potential to geometric tuple}
   \scrA _{ Q } \colon \cMell \dashrightarrow \cM_Q
\end{align}
be the rational map of stacks
sending a general tuple
\(
\left(
E,
L_0,
L_1,
M _{ 1 },
\ldots,
M _{ n }
\right)
\)
to the potential of the AS-regular $\bZ$-algebra of type $Q$
associated
by \cite[Theorem~5]{2007.07620}
with the very strong spherical helix
generated by the spherical collection
\(
\left(
\cS_i
\right)_{i=1}^{n+3}
\).
An isomorphism of tuples as in~\pref{df:tuple} induces an isomorphism of the dg categories whose objects are the corresponding spherical collections, which hence induces an isomorphism of the corresponding potentials.
\end{definition}

Now we construct a rational map
\begin{align}\label{eq:geometric tuple to potential}
     \scrB _{ Q }
     \colon
     \cM_Q
     \dashrightarrow
     \cMell,
\end{align}
which we will show to be the inverse of
\(
   \scrA_Q
\).
Given a general potential of the quiver $Q$,
let $A$ be the corresponding AS-regular $\bZ$-algebra of type
\(
   Q
\).
Then the sequence
$
    (P_i)_{i=1}^\ell
$
where $P_i$ is the $i$-th projective $A$-module
regarded as an object of $\qgr A$
is a foundation of a very strong helix.
We introduce the notation
\begin{align}\label{eq:Beilinson--Orlov type collection}
(\cO_{l_1}(-1),\ldots,\cO_{l_n}(-1),\cO_X,\cO_X(l),\cO_X(2l))
\end{align}
for the objects
constituting the exceptional collection
obtained from $(P_i)_{i=1}^\ell$
by the sequence of mutations specified by
\(
   \sigma ^{ - 1 }
\).
The semiorthogonal summand
$
\la \cO_X,\cO_X(l),\cO_X(2l) \ra
$
can be identified
with the derived category $D^b \coh X_0$
of a noncommutative projective plane $X_0$. Indeed, by the upper semi-continuity of the dimensions of cohomology groups, the genericity of the potential implies that
\(
   \cO _{ X },
    \cO _{ X } ( l ),
    \cO _{ X } ( 2 l )
    \in
    \langle
        \cO _{ X },
        \cO _{ X } ( l ),
        \cO _{ X } ( 2 l )
    \rangle
\)
is a full strong exceptional collection whose endomorphism algebra is isomorphic to the quotient of the path algebra of the Beilinson quiver with three vertices by three relations of length two which are sufficiently general in the moduli, so that they coincides with relations corresponding to a 3-dimensional Sklyanin algebra.
Let $(E,L_0,L_1)$ be the triple
describing the isomorphism class of $X_0$. Note that it is functorially obtained from the quiver with relations as the moduli space of point representations and (the data obtained from) the universal representation on the moduli space, and that there is a canonical functor
\(
   D ^{ b } \coh E
   \to
    D ^{ b } \coh X_0
    =
    \la \cO _{ X }, \cO _{ X } ( l ), \cO _{ X } ( 2 l ) \ra
\).
Again by the upper semi-continuity of the dimensions of cohomology groups, the genericity of the potential implies that the semiorthogonal summands
$
\la \cO_{l_k}(-1) \ra
$
for $1 \le k \le n$
are mutually orthogonal,
and
the gluing functors
$
\la \cO_{l_k}(-1) \ra
\to
D^b \coh X_0
$
are determined
by their images,
which we write as
$
\cO_{p_k}
$.
The object $\cO_{p_k}$ are point objects
(i.e., those coming from point modules of the corresponding
AS-regular ($\bZ$-)algebra)
at least if the potential of $A$ is sufficiently general
in $\cM_Q$,
which is the pushforward of the skyscraper sheaf
\(
   \cO _{ p _{ k } } \in \coh E
\)
along the canonical functor. Finally, we obtain the objects
\(
   M _{ k } = \cO _{ E } ( p _{ k } )
\)
as the twist of
\(
   \cO _{ E }
\)
by the spherical object
\(
    \cO _{ p _{ k } } [ - 1 ]
\).
This gives a construction of a tuple
\(
    \left(
    E,
    L_0,
    L_1,
    M _{ 1 },
    \ldots,
    M _{ n }
    \right)
\)
as in~\pref{df:tuple} from
\(
    A
\).
The definition of the action of the functor
\(
   \scrB _{ Q }
\)
on isomorphisms of potentials is rather straightforward, as all steps of the construction of the tuple are functorial.

This implies an equivalence
\(
   D ^{ b } \qgr A
   \simeq
    D ^{ b } \qgr S
\)
where the graded algebra
\(
   S
\)
is the noncommutative blow-up
of the noncommutative projective plane
\(
    X _{ 0 }
\)
at
\(
   p _{ 1 }, \ldots, p _{ n }
\)
in the sense of~\cite{MR1846352},
since
\(
   D ^{ b } \qgr S
\)
is
also obtained by gluing
\(
   n
\)
copies of the derived category of
\(
   \Spec \bfk
\)
to
\(
   D ^{ b } \coh X _{ 0 }
\)
along the objects
\(
   \cO _{ p _{ k } }
\)
\cite[Theorem~8.4.1]{MR1846352}.

There is an adjoint pair of functors
\begin{align}
    \iota ^{ \ast }
    \colon
    D ^{ b } \qgr S
    \rightleftarrows
    D ^{ b } \coh E
    \colon
    \iota _{ \ast }
\end{align}
by~\cite[Theorem~6.2.2]{MR1846352}.
When
\(
   X _{ 0 } \cong \bP ^{ 2 }
\),
this comes from the embedding
\(
   \iota
    \colon
    E
    \hookrightarrow
    \Bl _{ p _{ 1 }, \ldots, p _{ n } }
    X _{ 0 }
\)
of
\(
   E
\)
as an anti-canonical divisor.
In particular, the full subcategory
\(
   \left(
    P _{ i }
   \right)
   _{
    i = 1
   }
   ^{ \ell }
   \subset   
    D ^{ b } \qgr A
    \simeq
   D ^{ b } \qgr S
\)
coincides with the directed subcategory
of the dg category
\(
   \left(
    \iota ^{ \ast } P _{ i }
   \right)
      _{
    i = 1
   }
   ^{ \ell }
   \subset
    D ^{ b } \coh E
\)
(i.e., the non-full subcategory
with the same set of objects
obtained by throwing away morphisms
from $P_i$ to $P_j$ for $i > j$
and endomorphisms of all objects
not proportional to the identity morphisms).
By the upper semi-continuity of the dimensions of cohomology groups,
the same applies to the general case.
It follows that the cone
of the unit
\(
   \id
   \to
    \iota _{ \ast } \iota ^{ \ast }
\)
of the adjunction
is isomorphic to the Serre functor of
\(
   D ^{ b } \qgr S
\)
shifted by
\(
   [ - 1 ]
\),
so that the restriction functor
\(
   \iota ^{ \ast }
\)
commutes with mutations of exceptional collections in
\(
   D ^{ b } \qgr S
\)
and mutations of spherical collections in
\(
   D ^{ b } \coh E
\).


\begin{theorem} \label{th:birational}
The rational map $\scrA_Q$ of~\eqref{eq:potential to geometric tuple}
is inverse to the rational map $\scrB_Q$
of~\eqref{eq:geometric tuple to potential}.
\end{theorem}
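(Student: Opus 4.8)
The plan is to verify that both composites $\scrB_Q \circ \scrA_Q$ and $\scrA_Q \circ \scrB_Q$ are $2$-isomorphic to the identity rational maps of the irreducible stacks $\cMell$ and $\cM_Q$. Since both $\scrA_Q$ and $\scrB_Q$ pass through very strong spherical helices on a smooth projective curve of genus $1$, with the braids $\sigma$ and $\sigma^{-1}$ mediating in mutually inverse ways between the foundation of such a helix and the Beilinson--Orlov-type collection, the two composites are essentially mirror images of one another; I would carry out the first in detail and then indicate how the second follows by running the same bookkeeping backwards. Before anything else one checks that the loci of definition are nonempty: the point of $\cM_Q$ coming from an honest del Pezzo surface in the given deformation class, whose anticanonical curve is $E$, furnishes such a point (this is the classical situation of \cite{MR1086882,MR1230966,MR2836401}), and nonemptiness together with irreducibility gives density.

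For $\scrB_Q \circ \scrA_Q \cong \id_{\cMell}$ I would start from a sufficiently general tuple $(E, L_0, L_1, M_1, \ldots, M_n)$. By construction $\scrA_Q$ first forms the spherical collection $(\cS_i)_{i=1}^{n+3}$ as the image under $\sigma$ of $(\cO_E, L_0, L_0 \otimes L_1, M_1, \ldots, M_n)$, and then, via \cite[Theorem~5]{2007.07620}, the AS-regular $\bZ$-algebra $A$ of type $Q$ realizing this very strong spherical helix; the foundation $(P_i)_{i=1}^{\ell}$ of the very strong helix in $\qgr A$ is, under $D^b \qgr A \simeq D^b \qgr S$ and the restriction $\iota^{\ast} \colon D^b \qgr S \to D^b \coh E$ discussed just before the theorem, the directed subcategory of the spherical collection $(\iota^{\ast} P_i)_{i=1}^{\ell}$ on $E$. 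Applying $\scrB_Q$ now mutates $(P_i)_{i=1}^{\ell}$ by $\sigma^{-1}$; since $\iota^{\ast}$ commutes with mutations on both sides and $\sigma^{-1}$ undoes $\sigma$, the resulting Beilinson--Orlov-type collection $(\cO_{l_1}(-1), \ldots, \cO_{l_n}(-1), \cO_X, \cO_X(l), \cO_X(2l))$ is identified, through $D^b \qgr A \simeq D^b \qgr S$, with the Beilinson--Orlov collection of the noncommutative blow-up of the noncommutative plane $X_0$ at $p_1, \ldots, p_n$, where $X_0$ carries the triple $(E, L_0, L_1)$ and $\cO_E(p_k) \simeq M_k$ --- precisely the data fed into $\scrA_Q$. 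Consequently $\scrB_Q$ reads the triple $(E, L_0, L_1)$ off the semiorthogonal summand $\la \cO_X, \cO_X(l), \cO_X(2l) \ra$ (a noncommutative plane is determined by its triple), recovers the points $p_k \in E$ from the orthogonal summands $\la \cO_{l_k}(-1) \ra$ through their point objects $\cO_{p_k}$, and returns $M_k = \cO_E(p_k)$ as the spherical twist of $\cO_E$ along $\cO_{p_k}[-1]$. As every step of both $\scrA_Q$ and $\scrB_Q$ is functorial for isomorphisms of the input, this yields a $2$-isomorphism $\scrB_Q \circ \scrA_Q \cong \id_{\cMell}$.

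For $\scrA_Q \circ \scrB_Q \cong \id_{\cM_Q}$ I would argue dually. A sufficiently general potential is recovered, up to the automorphisms of $\bfk Q$ quotiented out in $\cM_Q$, from its associated AS-regular $\bZ$-algebra $A$ of type $Q$, and $A$ is tautologically the total morphism $\bZ$-algebra of the very strong helix $(P_i)_{i \in \bZ}$ in $\qgr A$; so it suffices to show that the helix produced by $\scrA_Q$ out of the tuple $\scrB_Q(A)$ is equivalent to $(P_i)$. But $\scrB_Q(A)$ was extracted precisely so that the $\sigma^{-1}$-mutation of $(P_i)$ realizes the Beilinson--Orlov collection of the noncommutative blow-up determined by that tuple; applying the $\sigma$-mutation and \cite[Theorem~5]{2007.07620} to $\scrB_Q(A)$ therefore reconstructs a very strong helix whose restriction along $\iota^{\ast}$ has the same directed subcategory, hence the same AS-regular $\bZ$-algebra, as $(P_i)$, and so it is equivalent to $(P_i)$. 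Alternatively, once $\scrB_Q \circ \scrA_Q \cong \id$ is known, density of the image of $\scrA_Q$ together with equality of dimensions forces $\scrA_Q$ to be birational with rational inverse $\scrB_Q$.

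The step I expect to be the main obstacle is the precise matching that occurs in the middle of the first composite: establishing that the noncommutative-blow-up structure of $D^b \qgr A$ produced by $\scrA_Q$ from the geometric tuple --- in particular the identification of its noncommutative plane with the one carrying the triple $(E, L_0, L_1)$, and of its blown-up points with the $p_k$ satisfying $\cO_E(p_k) = M_k$ --- is the same structure that $\scrB_Q$ reconstructs from $A$ through the moduli of point representations of $\Qdir$, compatibly with the braids $\sigma$ and $\sigma^{-1}$. This requires tracking how the point objects $\cO_{p_k}$ and the orthogonal summands $\la \cO_{l_k}(-1) \ra$ behave under $\iota^{\ast}$ and under the canonical functor $D^b \coh E \to D^b \coh X_0$, and identifying the geometric-data-to-algebra correspondence of \cite[Theorem~5]{2007.07620} with the noncommutative-blow-up description of the $\qgr$ of the resulting algebra in the sense of \cite{MR1846352}. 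The remaining points --- nonemptiness of the loci of definition, and preservation of fullness, strongness, and very-strongness of the collections under the deformations in play --- are handled, as elsewhere in the paper, by upper semicontinuity of the dimensions of cohomology groups, exactly as in \pref{lm:very strong}.
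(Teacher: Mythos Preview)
Your proposal is correct and follows essentially the same route as the paper: both composites are verified via the compatibility of mutations of exceptional collections in $D^b\qgr A\simeq D^b\qgr S$ with spherical twists in $D^b\coh E$ under the restriction functor $\iota^\ast$, so that applying $\sigma$ and $\sigma^{-1}$ on the two sides cancel. The only cosmetic difference is that the paper declares $\scrB_Q\circ\scrA_Q\simeq\id$ ``straightforward'' and spells out $\scrA_Q\circ\scrB_Q\simeq\id$, whereas you do the reverse; your alternative density-plus-dimension argument for the second composite is not in the paper but is a legitimate shortcut once the first composite is known.
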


\begin{proof}
The equivalence
\(
\scrB _{ Q }
\circ
\scrA _{ Q }
\simeq
\id
_{
\cMell
}
\)
is straightforward.
To show
\(
\scrA _{ Q }
\circ
\scrB _{ Q }
\simeq
\id
_{
   \cM_Q
}
\), take a general potential of the quiver
\(
Q
\)
and let
\(
I
\subset
\bfk \Qdir
\)
be the corresponding relations of the directed subquiver
\(
\Qdir
\)
(i.e., the quiver with the same set of vertices as $Q$
and the set of arrows consisting of all arrows
from a vertex to a vertex with a larger index).
The assertion to be checked is that
if we perform the sequence of mutations given by
\(
\sigma ^{ - 1 } \in \Br_{n+3}
\)
to the full exceptional collection
\(
\left(
P _{ i }
\right)
_{
i = 1
}
^{ \ell }
\)
of
\(
D^b \qgr A
\simeq
D^b \module \bfk \Qdir / I
\)
to obtain the Beilinson--Orlov type collection as in~\eqref{eq:Beilinson--Orlov type collection}, take the corresponding geometric tuple as above, take the associated spherical collection obtained by performing the spherical twists specified by
\(
\sigma \in \Br_{n+3}
\),
and then take the directed subcategory of thus-obtained spherical collection,
then it coincides with the original full strong exceptional collection
\(
\left(
P _{ i }
\right)
_{
i = 1
}
^{ \ell }
\).
This follows from the compatibility of mutations of exceptional collections
and spherical collections
discussed above.
\end{proof}

\bibliographystyle{amsalpha}
\bibliography{bibs}

\end{document}